\documentclass{article}
\usepackage[utf8]{inputenc}
\usepackage{graphicx}
\usepackage{float}
\usepackage{xcolor}
\usepackage[table]{xcolor} 
\usepackage{amsmath,amssymb,amsthm}
\usepackage{soul}

\title{On the achromatic index of Johnson graphs $J(n,2)$ 
}

\date{\today}

\newtheorem{theorem}{Theorem}[section]
\newtheorem{definition}[theorem]{Definition}

\newtheorem{lemma}[theorem]{Lemma}
\newtheorem{corollary}[theorem]{Corollary}
\newtheorem{proposition}[theorem]{Proposition}

\usepackage{amsfonts}

\newcommand{\Z}{\mathbb{Z}}

\newcommand{\blue}[1]{\textcolor{blue}{#1}}
\newcommand{\red}[1]{\textcolor{red}{#1}}

\definecolor{greeen}{RGB}{47, 141, 26}

\begin{document}

\author{Gabriela Araujo-Pardo\thanks{Instituto de Matem\'aticas, Universidad Nacional Aut\'onoma de M\'exico, Campus Juriquilla, Mexico, {\tt garaujo@im.unam.mx}, supported by DGAPA-M\'exico PAPIIT IN113324, SECHITI-M\'exico: CBF2023-2024-552 and a grant from the Research Vice-rectorate of Universitat de Lleida.}
    \and
	Cristina Dalf\'o\thanks{Departament de Matem\`atica, Universitat de Lleida, Igualada (Barcelona), {\tt cristina.dalfo@udl.cat}, supported by AGAUR from the Catalan Government under project 2021SGR00434, MICINN from the Spanish Government under project PID2025-171721NB-I00 and DGAPA-M\'exico PAPIIT IN113324.}
    \and
    M\'onica A. Reyes\thanks{Departament de Matem\`atica, Universitat de Lleida, Igualada (Barcelona), {\tt monicaandrea.reyes@udl.cat}, supported by AGAUR from the Catalan Government under project 2021SGR00434 and FI SDUR 2022, MICINN from the Spanish Government under project PID2025-171721NB-I00, and SECHITI-M\'exico: CBF2023-2024-552.}
    }

\maketitle
\begin{abstract}
In this paper, we study proper and complete edge-colorings of Johnson graphs $J(n,2)$, also called $n$-triangular graphs. They are isomorphic both to the 2-token graphs of complete graphs and to the line graphs of complete graphs. A $t$-edge-coloring of a graph $G$ is a function that assigns one color from $\{1,2,\ldots,t\}$ to each edge. Such a coloring is called proper if no two incident edges receive the same color, and complete if every pair of distinct colors appears on a pair of incident edges. The achromatic index, denoted by $\alpha_2(G)$, is the largest integer $t$ for which $G$ admits a proper and complete $t$-edge-coloring. We establish new lower and upper bounds for $\alpha_2(J(n,2))$, provide explicit proper and complete edge-colorings attaining the lower bounds, and determine the exact value of $\alpha_2(J(n,2))$ for several values of $n$.
\end{abstract}

\textbf{Keywords: achromatic index, Johnson graph, edge-coloring}.\\

\noindent \textbf{Mathematics Subject Classifications:} 05C15, 05C35, 05C70.

\section{Introduction and preliminaries}

Coloring is a central topic in graph theory, with applications ranging from scheduling and frequency assignment to network design and optimization. While the standard 
chromatic number measures the minimum number of colors required for a proper coloring, other parameters explore different extremal aspects of colorings. One of these parameters is the \emph{achromatic number}, introduced by Harary, Hedetniemi, and Prins \cite{hhp67} in 1967, which has attracted considerable attention in graph theory. It is the largest number of colors in a vertex-coloring that can be used while keeping the coloring both proper (no two adjacent vertices share the same color) and complete (any pair appears on at least one pair of adjacent vertices). Finding the achromatic number is known to be an NP-complete problem, even for simple graph families such as bipartite graphs, cographs, interval graphs, and trees; see 
Bodlaender \cite{b89}, and Farber, Hahn, Hell, and Miller  \cite{fhhm86}.
Further results on the achromatic number and related complete-coloring
parameters can be found in
Araujo-Pardo, Montellano-Ballesteros, Rubio-Montiel, and Strausz \cite{apms18}, 
Araujo-Pardo, Montellano-Balles\-teros, and Strausz \cite{apms11},
Araujo-Pardo, Montellano-Ballesteros, Strausz, and Rubio-Montiel \cite{apmsr14}, 
Araujo-Pardo and Rubio-Montiel \cite{apr18}, 
Cairnie and Edwards \cite{ce97}, 
Chartrand and Zhang \cite{cz09}, 
Hell and Miller\cite{hm76}, and 
Yegnanarayanan \cite{y01}.

The concept of complete coloring can also be applied to edge-colorings. The maximum integer $t$ for which a graph $G$ admits a proper and complete $t$-edge-coloring is called the \emph{achromatic index}, denoted by $\alpha_2(G)$. This parameter has been studied by several authors. For instance, the problem of determining the achromatic index for complete graphs was first considered by Bouchet~\cite{b78}, who showed a strong connection between this parameter and the existence of finite projective planes. Further results and bounds for the achromatic index in different graph classes can be found in Araujo-Pardo, Montellano-Ballesteros, Olsen, and Rubio-Montiel \cite{amor21}, and Chiang and Fu \cite{cf95}.

In this paper, we study the achromatic index of Johnson graphs $J(n,2)$, also known as the $n$-triangular graphs. More generally, the Johnson graph $J(n,k)$ has as vertices the $k$-subsets of an $n$-element set, where two vertices are adjacent whenever they differ in exactly one element (equivalently, their intersection has size $k-1$). It has order $\binom{n}{k}$ and is $k(n-k)$-regular. In particular, $J(n,2)$ is a $2(n-2)$-regular graph on $\binom{n}{2}$ vertices. Johnson graphs form a fundamental family of distance-regular graphs and have been extensively studied because of their rich combinatorial structure and connections with design theory and coding theory.

Johnson graphs form a particular family of token graphs. Indeed, $J(n,k)$ is isomorphic to the $k$-token graph of the complete graph $K_n$ (see Fabila-Monroy, Flores-Pe\~{n}aloza, Huemer, Hurtado, Urrutia, and Wood \cite{ffhhuw12}, and Dalf\'o, Duque, Fabila-Monroy, Fiol, Huemer, Trujillo-Negrete, and Zaragoza Mart\'inez \cite{ddffhtz2021}). Let $G=(V,E)$ be a graph and let $1\le k\le |V|$. The \emph{$k$-token graph} of $G$, denoted by $F_k(G)$, has vertex set
$$
V(F_k(G))=\{A\subseteq V(G): |A|=k\},
$$
where two vertices $A$ and $B$ are adjacent if and only if
$$
A\triangle B=\{a,b\},
$$
with $a\in A$, $b\in B$, and $ab\in E(G)$.

When $k=2$, we will write $ij$ instead of $\{i,j\}$ whenever convenient. Accordingly, $(ij,il)$ denotes the edge joining the vertices $ij$ and $il$.

Throughout this paper, all graphs are finite, simple, and undirected. A graph $G = (V, E)$ consists of a set of vertices $V(G)$ and a set of edges $E(G)$. We write $|V|$ and $|E|$ for the number of vertices and edges in $G$, respectively. Two edges are said to be \emph{incident} if they share a common vertex.

A \emph{$t$-vertex-coloring} of $G$ is a mapping
$$
\varphi:V(G)\to\{1,2,\ldots,t\}.
$$
A proper and complete $t$-vertex-coloring is called \emph{achromatic}. The \emph{chromatic number}, denoted by $\chi(G)$, is the minimum number of colors in a proper vertex-coloring of $G$, and the \emph{achromatic number}, denoted by $\alpha(G)$, is the maximum number of colors in an achromatic vertex-coloring. Harary, Hedetniemi, and Prins~\cite{hhp67} proved that if $\chi(G)\le t\le\alpha(G)$, then $G$ admits a complete $t$-vertex-coloring.

Analogously, a \emph{$t$-edge-coloring} of $G$ is a mapping
$$
\varphi : E(G) \to \{1, 2, \dots, t\}.
$$
Such a mapping induces a partition of $E(G)$ into \emph{color classes} $E_i = \varphi^{-1}(i)$, for $i \in \{1,\dots,t\}$.
A proper and complete $t$-edge-coloring is called \emph{achromatic}. The \emph{achromatic index}, denoted by $\alpha_2(G)$, is the maximum integer $t$ for which $G$ admits an achromatic $t$-edge-coloring.

It is well known that
$$
\alpha_2(G)=\alpha(L(G)).
$$






For the Johnson graph $J(n,2)$, it is isomorphic to the line graph
$L(K_n)$.
Consequently,
$ \alpha_2(J(n,2)) = \alpha(L(J(n,2))) = \alpha(L^2(K_n)), $ so finding the achromatic index of $J(n,2)$ is the same as finding the 
achromatic number of the second iterated line graph of the complete graph. 
This connects our problem directly to the general study of achromatic 
colorings of line graphs and iterated line graphs.

In this paper, we establish new lower and upper bounds for the achromatic 
index of the Johnson graphs $J(n,2)$, together with exact values for small 
instances. In particular, our new lower-bound constructions give 
approximately $n^2$ colors, nearly twice the elementary lower bound of 
$\binom{n}{2}$.

The remainder of the paper is organized as follows. In Subsection~\ref{subsec:lowerbound}, we construct edge-colorings attaining the lower bounds on the achromatic index of Johnson graphs $J(n,2)$. In Subsection  \ref{subsec:upperbound}, we give the upper bounds, and, in Subsection \ref{subsec:theorems}, we prove the main theorems of this paper. Finally, Section~\ref{sec:conclusion} summarizes the results and discusses directions for future research, particularly from the perspective of Johnson graphs as token graphs of complete graphs.

\subsection{Fundamental results}

In this section, we collect some basic observations and known results about complete edge-colorings and token graphs that will be used throughout the paper.

Let $ G $ be a graph with $ n $ vertices and $ m $ edges. As noted by Reyes, Dalfó, and Fiol ~\cite{rdf24} in Section~5, every edge $ e = \{u, v\} \in E(G) $ gives rise to $ \binom{n-2}{k-1} $ edges in the $k$-token graph $ F_k(G) $. This happens because we obtain an edge in $ F_k(G) $ every time a token moves between $ u $ and $ v $, while the remaining $ k - 1 $ tokens are fixed in the $ n - 2 $ remaining vertices. In particular, as the Johnson graph $ J(n,k) \cong F_k(K_n) $, each of the $ \binom{n}{2} $ edges of $ K_n $ induces $ \binom{n-2}{k-1} $ edges in $ J(n,k) $. This structure allows for a natural decomposition of the edge set of $ F_k(G) $ based on the edges of $ G $.

The following proposition shows that this decomposition gives rise to a complete edge-coloring.

\begin{definition}
Let $G$ be a finite simple graph and let $F_k(G)$ be its $k$-token graph. 
For each edge $e=\{u,v\} \in E(G)$, define
$$
\mathcal{C}_e := \{\{A,B\} \in E(F_k(G)) : A \triangle B = e\},
$$
that is, the set of edges of $F_k(G)$ induced by $e$. Equivalently,
$$
\mathcal{C}_e = \Bigl\{ \{S, (S \setminus \{u\}) \cup \{v\}\} : S \subset V(G),\, |S| = k,\, u \in S,\, v \notin S \Bigr\}.
$$
We define an edge-coloring
$$
\varphi:E(\mathcal{F}_k(G))\longrightarrow E(G)
$$
by
$$
\varphi(\{A,B\})=A\triangle B.
$$
Equivalently, for every $e\in E(G)$,
$$
C_e=\varphi^{-1}(e).
$$
Thus, the edges of $G$ are used as color labels.

\end{definition}

\begin{proposition}
Let $G$ be a finite simple graph and let $F_k(G)$ be its $k$-token graph, where $2 \le k \le |V(G)| - 2$. Then $F_k(G)$ admits an achromatic edge-coloring with $|E(G)|$ colors.\end{proposition}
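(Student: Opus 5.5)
The plan is to verify directly that the coloring $\varphi(\{A,B\})=A\triangle B$ from the preceding definition is achromatic and uses every label in $E(G)$. This splits into three checks: every color class $\mathcal{C}_e$ is nonempty, the coloring is proper, and the coloring is complete.

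\emph{Nonemptiness.} Fix $e=\{u,v\}\in E(G)$. I need a $k$-set $S$ with $u\in S$ and $v\notin S$. Such an $S$ exists because $k-1\le |V(G)|-2$. Then $\{S,(S\setminus\{u\})\cup\{v\}\}\in\mathcal{C}_e$, so exactly $|E(G)|$ colors are used.

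\emph{Properness.} Suppose two edges $\{A,B\}$ and $\{A,B'\}$ of $F_k(G)$ share the vertex $A$ and have the same color $e$. Then $B=A\triangle e=B'$, so they are the same edge. Hence incident distinct edges always receive distinct colors.

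\emph{Completeness.} This is the only step that uses the hypotheses $2\le k\le |V(G)|-2$, and it is where I expect the real work. Take distinct $e,f\in E(G)$. I need a vertex $A$ of $F_k(G)$ that contains exactly one endpoint of $e$ and exactly one endpoint of $f$. Then $\{A,A\triangle e\}$ and $\{A,A\triangle f\}$ are edges of $F_k(G)$: each symmetric difference swaps a token along an edge of $G$, so the result is again a $k$-set, and the adjacency condition holds because $e,f\in E(G)$. These two edges are incident at $A$ and carry colors $e$ and $f$. I split into two cases.

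\begin{itemize}
\item[(i)] $e=\{u,v\}$ and $f=\{u,w\}$ share the vertex $u$. Take $A\ni u$ with $v,w\notin A$. This needs $k-1$ further elements chosen from $|V(G)|-3$ vertices, which is possible because $k\le |V(G)|-2$.
\item[(ii)] $e=\{u,v\}$ and $f=\{w,x\}$ are disjoint. Take $A\supseteq\{u,w\}$ with $v,x\notin A$. This needs $k\ge 2$ and $k-2\le |V(G)|-4$.
\end{itemize}

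Both requirements are exactly what the hypotheses give, which completes the plan.
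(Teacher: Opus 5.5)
Your proposal is correct and takes the same route as the paper: the coloring $\varphi(\{A,B\})=A\triangle B$, and the same two-case completeness argument with the same choice of the $k$-set (one endpoint of each edge inside it, the other outside). Your one-line properness argument ($B=A\triangle e=B'$) and your nonemptiness check fill in two details the paper leaves implicit: why each $\mathcal{C}_e$ is a matching, and why all $|E(G)|$ labels actually occur.
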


\begin{proof}
We prove that the coloring $\varphi$ defined above is both proper and complete.\\
\textbf{Properness.} Fix an edge $e=\{u,v\}\in E(G)$. 
Consider two distinct edges in $\mathcal{C}_e$, say $\{A,A'\}$ and $\{B,B'\}$. 
These do not share any endpoint in $F_k(G)$, so the edges of $\mathcal{C}_e$ form a matching. 
Hence, $\varphi$ is a proper edge-coloring. \\
\textbf{Completeness.} 
Let $e=\{a,b\}$ and $f=\{c,d\}$ be two distinct edges of $G$. 
We show that there exists a vertex of $F_k(G)$ incident to one edge of $\mathcal{C}_e$ and one edge of $\mathcal{C}_f$. 
There are two cases to consider:\\
\emph{Case (i):} $e$ and $f$ are disjoint, i.e.\ $\{a,b\} \cap \{c,d\} = \emptyset$.  
Take a set $S \subset V(G)$ with $|S|=k$ such that $a,d \in S$, $b,c \notin S$.  
Such a set exists because, after fixing $a$ and $d$, the remaining
$k-2$ elements of $S$ can be chosen from
$V(G)\setminus\{a,b,c,d\}$, and
$0\leq k-2\leq |V(G)|-4$.
Then $\{S,\, (S\setminus\{a\}) \cup \{b\}\} \in \mathcal{C}_e, 
\quad 
\{S,\, (S\setminus\{d\}) \cup \{c\}\} \in \mathcal{C}_f,$
and both edges share the vertex $S$, hence they are adjacent in $F_k(G)$. \\
\emph{Case (ii):} $e$ and $f$ share one vertex, say $a=c$.  
Take a set $S \subset V(G)$ with $|S|=k$ such that $a \in S$, $b,d \notin S$.  
Such a set exists because, after fixing $a$, the remaining $k-1$
elements can be chosen from $V(G)\setminus\{a,b,d\}$, and
$0\leq k-1\leq |V(G)|-3$.
Then
$$
\{S,\, (S\setminus\{a\}) \cup \{b\}\} \in \mathcal{C}_e, 
\quad 
\{S,\, (S\setminus\{a\}) \cup \{d\}\} \in \mathcal{C}_f,
$$
and again both edges share the vertex $S$, so they are adjacent.
In both cases, there is a vertex of $F_k(G)$ incident to one edge colored $e$ and one edge colored $f$. 
Therefore, $\varphi$ is a complete edge-coloring of $F_k(G)$.
\end{proof}

As a consequence of the previous proposition, we obtain the following general lower bound on the achromatic index of token graphs.

\begin{corollary}
Let $G$ be a simple graph and let $F_k(G)$ be its $k$-token graph, where $2 \le k \le |V(G)| - 2$. Then the achromatic index of its $k$-token graph satisfies
$$
\alpha_2(F_k(G)) \geq |E(G)|.
$$
\end{corollary}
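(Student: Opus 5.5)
This corollary is immediate from the preceding proposition, and I will derive it directly from the definition of the achromatic index. The plan has one substantive step plus one small check.

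First, I fix $G$ and $k$ with $2\le k\le |V(G)|-2$. The proposition then supplies the coloring $\varphi(\{A,B\})=A\triangle B$ of $E(F_k(G))$, with color set $E(G)$, and shows it is proper and complete. After relabeling the colors bijectively by $\{1,\dots,|E(G)|\}$, this is a proper and complete $|E(G)|$-edge-coloring.

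The one point I will check is that every color class $\mathcal{C}_e$ is nonempty. Otherwise the coloring would not genuinely use $|E(G)|$ colors. Completeness already implies this whenever $|E(G)|\ge 2$, since each color must appear on an incident pair. In general, for $e=\{u,v\}$ I can pick $S\ni u$, $v\notin S$, $|S|=k$, using $k-1\le |V(G)|-2$; this gives an edge of $\mathcal{C}_e$.

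Since $\alpha_2(F_k(G))$ is the maximum $t$ admitting an achromatic $t$-edge-coloring, it follows that $\alpha_2(F_k(G))\ge |E(G)|$. No step is a real obstacle; the only care needed is the nonemptiness of the color classes noted above.
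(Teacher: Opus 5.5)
Your proposal is correct and follows the paper's approach: the paper deduces the corollary directly from the preceding proposition, exactly as you do. Your explicit check that every class $\mathcal{C}_e$ is nonempty is a detail the paper leaves implicit; it only matters when $|E(G)|\le 1$, and it is handled correctly by your choice of $S$.
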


In particular, when $ G = K_n $, we have $ F_k(G) = J(n,k) $ and $ |E(K_n)| = \binom{n}{2} $. Therefore, we obtain the following immediate lower bound for Johnson graphs:

\begin{corollary}
For all integers $ n \geq 2 $ and $ 2 \leq k \leq n-2 $,
$$
\alpha_2(J(n,k)) \geq \binom{n}{2}.
$$
\end{corollary}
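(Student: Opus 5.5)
This is a direct specialization of the preceding corollary, which gives $\alpha_2(F_k(G)) \ge |E(G)|$ for every simple graph $G$ and every $2 \le k \le |V(G)|-2$. I will apply it with $G = K_n$.

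First I use the identification $J(n,k) \cong F_k(K_n)$ recalled in the introduction. It follows directly from the definitions: in $K_n$ every pair $a,b$ is an edge. So two $k$-subsets $A,B$ are adjacent in $F_k(K_n)$ exactly when $A \triangle B = \{a,b\}$ with $a\in A$ and $b \in B$, that is, when $|A\cap B| = k-1$. This is precisely the adjacency rule of $J(n,k)$.

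Next I check the hypotheses. The range $2 \le k \le n-2$ is exactly the range $2 \le k \le |V(K_n)|-2$ required in the proposition and its corollary. I then substitute $|E(K_n)| = \binom{n}{2}$ to get $\alpha_2(J(n,k)) \ge \binom{n}{2}$.

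The only point needing care is the boundary behaviour. The assumption $2\le k\le n-2$ forces $n \ge 4$, so the stated range $n\ge 2$ is harmless: for $n=2,3$ there is no admissible $k$ and the statement is vacuous. Moreover, $n\ge 4$ is exactly what the completeness argument in the proposition needs, since Case (i) uses two disjoint edges, i.e.\ four distinct vertices, and requires $k-2 \le n-4$. The properness part is independent of $n$. No further obstacle is expected.
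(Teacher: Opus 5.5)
Your proposal is correct and is exactly the paper's argument. Like the paper, you specialize the token-graph corollary $\alpha_2(F_k(G))\ge |E(G)|$ to $G=K_n$, using $J(n,k)\cong F_k(K_n)$ and $|E(K_n)|=\binom{n}{2}$. Your check of the isomorphism and your remark that the cases $n\in\{2,3\}$ are vacuous are accurate additions that the paper leaves implicit.
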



\section{The achromatic index of Johnson graphs}
\label{sect:results}

In this section, we present the main results of the paper. We begin with the following theorem.

\begin{theorem}\label{lower}
For every integer $n\geq 3$, the achromatic index of the Johnson graph
$J(n,2)$ satisfies
$$
\alpha_2(J(n,2))\geq
\begin{cases}
n(n-2), 
    & \text{if $n$ is odd},\\[1mm]
7, 
    & \text{if $n=4$},\\[1mm]
n(n-3)+3, 
    & \text{if $n\geq 6$ and $n\equiv 0\pmod{6}$},\\[1mm]
n(n-3)+2, 
    & \text{if $n\geq 6$ and $n\not\equiv 0\pmod{6}$}.
\end{cases}
$$
\end{theorem}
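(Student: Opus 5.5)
For odd $n$ we would give an explicit construction, and for even $n$ a modification of it. The structure to exploit is that $J(n,2)\cong L(K_n)$. Every edge $(ij,il)$ of $J(n,2)$ has a well-defined ``center'' $i$, so $E(J(n,2))$ splits into $n$ edge-disjoint cliques $Q_i$, $i\in\{1,\dots,n\}$. Each $Q_i\cong K_{n-1}$ has vertex set $\{ij: j\neq i\}$. Every vertex $ij$ lies in exactly the two cliques $Q_i$ and $Q_j$, and two distinct cliques $Q_i,Q_j$ meet only in the vertex $ij$. A color class must be a matching of $J(n,2)$; it may contain edges from several cliques, provided they avoid each other's vertices.

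\textbf{Odd $n$.} Here $n-1$ is even, so each $Q_i$ has a $1$-factorization into $n-2$ perfect matchings. We give each of these $n(n-2)$ matchings its own color. This coloring is proper, since the matchings inside one $Q_i$ are disjoint and the $Q_i$ are edge-disjoint. For completeness, take two classes $M\subseteq Q_i$ and $M'\subseteq Q_j$. If $i=j$, both are perfect matchings of the same vertex set, so they meet at every vertex. If $i\neq j$, both are perfect, so both cover the common vertex $ij$. This gives $\alpha_2(J(n,2))\ge n(n-2)$.

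\textbf{Even $n\ge 6$.} Now $n-1$ is odd. Each $Q_i$ has a near-$1$-factorization $\{M_{i,j}: j\neq i\}$, where $M_{i,j}$ is the near-perfect matching missing the vertex $ij$. With all $n(n-1)$ classes as colors, the only failures of completeness are the pairs $M_{i,j},M_{j,i}$. Both miss $ij$, and they have no other common vertex. Note that $M_{i,j}\cup M_{j,i}$ is itself a matching.

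The target $n(n-3)+2=n(n-1)-2(n-1)$ says we must give up about $2(n-1)$ colors in total. We would therefore sacrifice a small, structured set of classes: for instance, two ``special'' classes per clique in a controlled pattern. The edges of the sacrificed classes are redistributed, by adding them to the remaining classes or merging them into a few global colors, so that each surviving class from $Q_i$ becomes a perfect matching of $Q_i$ or covers $ij$ whenever its partner survives. Two large global colors are then assembled from unions of the form $M_{i,j}\cup M_{j,i}$ along a perfect matching of $K_n$, since $n$ is even; these account for the ``$+2$''. We would choose the near-$1$-factorizations coherently, e.g.\ the standard rotational ($\mathbb{Z}_{n-1}$) factorization transported to every clique, so that the missing vertices pair up along a $1$-factorization of $K_n$. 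Then every required adjacency reduces to a check on indices modulo $n-1$.

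For $n\equiv 0\pmod 6$ we expect one extra color to come from a resolvable decomposition of $K_n$ involving triangles: a triangle $\{i,j,l\}$ yields a matching $\{(ij,il),(ji,jl),(li,lj)\}$ taking one edge from each of $Q_i$, $Q_j$, $Q_l$. When $3\mid n$, such triangle structures tile evenly, which allows one more class to be carved out. The case $n=4$ (value $7$) would be handled by an explicit coloring of the $12$ edges of $J(4,2)$ (the octahedron), checked directly.

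The main obstacle is the even case. The odd case is immediate, but for even $n$ we must choose the redistribution so that properness survives the merges and every one of the $\binom{t}{2}$ pairs of surviving classes still meets. I would first find the pattern computationally for $n=6,8$ and then generalize it using the cyclic factorization.
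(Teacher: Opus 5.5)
Your odd case is the paper's argument: 1-factorize each clique $Q_i\cong K_{n-1}$ and give each of the $n(n-2)$ factors its own color. It is correct.

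The rest of the statement is not proved. For even $n\ge 6$ you give a plan rather than a coloring, the case $n\equiv 0\pmod 6$ is a conjecture, and you do not exhibit a $7$-coloring of $J(4,2)$. The concrete ingredients you name also fail:
\begin{itemize}
\item Your global colors $\bigcup_{\{i,j\}\in F}(M_{i,j}\cup M_{j,i})$, taken along a perfect matching $F$ of $K_n$, are not matchings. For disjoint $\{i,j\},\{k,l\}\in F$, both $M_{i,j}$ and $M_{k,l}$ cover the vertex $ik$. No choice can repair this: such a matching would need $n(n-2)>\binom n2$ vertices once $n\ge 4$.
\item The triangle set $\{(ij,il),(ji,jl),(li,lj)\}$ is not a matching either. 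Its edges pairwise share a vertex, so it is the triangle $ij,\,il,\,jl$ of $J(n,2)$.
\item No class can become ``a perfect matching of $Q_i$'', because $|V(Q_i)|=n-1$ is odd.
\end{itemize}
So the redistribution step, which is the whole difficulty, remains open.

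The missing idea is to avoid near-$1$-factorizations of $K_{n-1}$ altogether. Fix a perfect matching $\{i,p(i)\}$ of $K_n$ and remove the vertex $i\,p(i)$ from each $Q_i$. What remains is a box $B_i\cong K_{n-2}$ of even order. It has a genuine $1$-factorization, colored by a palette $\mathcal{P}_i$ of $n-3$ colors, for $n(n-3)$ colors in total.

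The $2(n-2)$ edges at the central vertex $i\,p(i)$ are colored by swapping palettes. The edges into $B_i$ receive $\mathcal{P}_{p(i)}$ bijectively, and the edges into $B_{p(i)}$ receive $\mathcal{P}_i$. The one leftover edge on each side gets a special color $c$ or $d$; across all central vertices each special color forms a matching.
\begin{itemize}
\item Properness: a noncentral vertex $ij$ sees only the disjoint palettes $\mathcal{P}_i,\mathcal{P}_j,\mathcal{P}_{p(i)},\mathcal{P}_{p(j)}$, plus possibly special colors, which are matchings.
\item Completeness: $\mathcal{P}_i$ and $\mathcal{P}_k$ meet at $ik$ when $k\ne i,p(i)$. $\mathcal{P}_i$ and $\mathcal{P}_{p(i)}$ meet at $i\,p(i)$. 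The colors $c$ and $d$ meet every palette, and each other, at the central vertices.
\end{itemize}

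For $n\equiv 0\pmod 6$, the paper replaces $c,d$ by three colors $c,d,e$ cycled over blocks of three consecutive double-boxes. Each color is absent from one double-box of the block. Its two edges are chosen to end in the boxes of that double-box, so it still meets that double-box's palettes, and this gives the extra color. For $n=4$, the paper lists an explicit $7$-coloring of $J(4,2)$.
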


        
    


To state the upper bound, we introduce the following notation. For $n\ge3$, let
$f,g:\mathbb{Z}_{0}\to\mathbb{Z}_{\blue{\geq0}}$ be defined by
$$
f(x)=\left\lfloor\frac{\binom{n}{2}(n-2)}{x}\right\rfloor,
\qquad
g(x)=4nx-10x+1.
$$
Furthermore, let
$$
x_0=x_0(n):=-
\frac{1}{8n-20}
+\frac12\sqrt{
\frac{n^2}{2}
-\frac n4
+\frac38
+\frac{\frac{15n}{4}-\frac{73}{8}}{4n^2-20n+25}},
$$
and define
$$
U(n):=
\max\left\{
\min\{f(\lfloor x_0\rfloor),g(\lfloor x_0\rfloor)\},
\min\{f(\lceil x_0\rceil),g(\lceil x_0\rceil)\}
\right\}.
$$
\begin{theorem}\label{teoupperbounds}
For any integer $n\geq 3$, we have 
$$\alpha_2(J(n,2)) \leq  U(n).$$

\end{theorem}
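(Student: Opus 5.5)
Take an achromatic $t$-edge-coloring of $J(n,2)$ and let $x$ be the size of a smallest color class. I will bound $t$ in two ways in terms of $x$ and then optimize over $x$.

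\textbf{Counting bound.} The graph $J(n,2)$ has $\binom{n}{2}$ vertices and is $2(n-2)$-regular, so it has $\binom{n}{2}(n-2)$ edges. Every color class has at least $x$ edges. Hence $tx\le \binom{n}{2}(n-2)$, that is, $t\le f(x)$.

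\textbf{Local bound.} Fix a color class $E_c$ with exactly $x$ edges; it is a matching. By completeness, each of the other $t-1$ colors must appear on an edge incident to $E_c$, so $t-1$ is at most the number of edges adjacent to $E_c$. An edge $(ij,il)$ of $J(n,2)$ lies in exactly two triangles of $L(K_n)$-type cliques, namely the clique $Q_i$ of vertices containing $i$ (of size $n-1$) and a triangle $\{ij,il,jl\}$. Its endpoints each have $2(n-2)-1$ further incident edges, giving at most $2(2n-5)=4n-10$ neighbouring edges. The common edges to both endpoints (two from the clique $Q_i$ through vertex third points... ) would be counted twice; I will not need that refinement unless the bound must be tightened. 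So each matching edge contributes at most $4n-10$ adjacent edges, and therefore $t-1\le x(4n-10)$, i.e.\ $t\le g(x)$.

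\textbf{Optimization.} From the two bounds, $t\le \min\{f(x),g(x)\}$. Since $f$ is nonincreasing and $g$ is increasing in $x$, the quantity $\max_x \min\{f(x),g(x)\}$ is attained at the integer neighbours of the crossing point of the real functions $\binom{n}{2}(n-2)/x$ and $(4n-10)x+1$. That crossing point solves
$$(4n-10)x^2+x-\binom{n}{2}(n-2)=0,$$
whose positive root is
$$x=\frac{-1+\sqrt{1+4(4n-10)\binom{n}{2}(n-2)}}{2(4n-10)}.$$

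To match $x_0$, I rewrite this root. I divide inside the root by $(8n-20)^2$, expand $\frac{n(n-1)(n-2)}{2}\cdot\frac{1}{4n-10}$ by polynomial division, and write the remainder over $4n^2-20n+25=(2n-5)^2$. This should reproduce exactly the stated radicand, so the crossing point equals $x_0$. I expect this algebraic identification to be the fiddly step, and I will check it against a small case such as $n=5$.

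\textbf{Conclusion.} For $x\le\lfloor x_0\rfloor$ we have $\min\{f(x),g(x)\}=g(x)\le g(\lfloor x_0\rfloor)$. For $x\ge\lceil x_0\rceil$ we have $\min\{f(x),g(x)\}=f(x)\le f(\lceil x_0\rceil)$. Monotonicity of $f$ and $g$ makes these two comparisons hold, and together they give $t\le U(n)$.
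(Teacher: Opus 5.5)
Your proposal is correct and follows essentially the same route as the paper. You use the counting bound $t\le f(x)$ and the local bound $t\le g(x)$ for a smallest color class of size $x$, then reduce $\max_x\min\{f(x),g(x)\}$ to $x\in\{\lfloor x_0\rfloor,\lceil x_0\rceil\}$ via the crossing point, and your polynomial-division plan does reproduce the paper's radicand, with remainder $\frac{15n}{4}-\frac{73}{8}$ over $(2n-5)^2$. State one detail explicitly: for integers $x\le x_0$, the inequality $f(x)\ge g(x)$ follows from $F(x)\ge g(x)$ only because $g(x)$ is an integer, so $\lfloor F(x)\rfloor\ge g(x)$, exactly as in the paper's Lemma~\ref{lem:op>}; also, your aside about double-counted edges is off (no edge other than $e$ meets both endpoints of $e$), though you never use it.
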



Table~\ref{tab:betterupper-bounds} lists the values of the upper bound $U(n)$ for several values of $n$.

Combining Theorems~\ref{lower} and~\ref{teoupperbounds}, we obtain the following corollary, which determines the exact value of $\alpha_2(J(n,2))$ for $n=3,4,5$ and provides lower and upper bounds for all $n\ge6$.

\begin{corollary}\label{mainth}
The achromatic index of the Johnson graph $J(n,2)$ satisfies
$$
\alpha_2(J(3,2))=3,\qquad
\alpha_2(J(4,2))=7,\qquad
\alpha_2(J(5,2))=15.
$$
Moreover, if $n\geq 7$ is odd, then
$$
n(n-2)
=
f\left(\frac{n-1}{2}\right)
\leq
\alpha_2(J(n,2))
\leq
U(n).
$$
If $n\geq 6$ is even and $n\not\equiv 0\pmod{6}$, then
$$
n(n-3)+2
=
f\left(\frac{n}{2}\right)
\leq
\alpha_2(J(n,2))
\leq
U(n).
$$
Finally, if $n\geq 6$ and $n\equiv 0\pmod{6}$, then
$$
n(n-3)+3
=
f\left(\frac{n}{2}\right)+1
\leq
\alpha_2(J(n,2))
\leq
U(n).
$$

\end{corollary}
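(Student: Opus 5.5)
The corollary combines Theorem~\ref{lower} and Theorem~\ref{teoupperbounds}, so the plan has two parts. First, identify the lower bounds of Theorem~\ref{lower} with values of $f$. Second, compute $U(n)$ for $n=3,4,5$ and check that it meets the lower bound.

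\textbf{Identifying the lower bounds with $f$.} Recall
\[
f(x)=\left\lfloor \frac{n(n-1)(n-2)}{2x}\right\rfloor .
\]
\begin{itemize}
\item For $n$ odd and $x=(n-1)/2$ the quotient is exact:
\[
\frac{n(n-1)(n-2)}{n-1}=n(n-2),
\]
which is the odd case of Theorem~\ref{lower}.
\item For $n$ even and $x=n/2$ we get $f(n/2)=\lfloor (n-1)(n-2)\rfloor=(n-1)(n-2)=n^2-3n+2=n(n-3)+2$. This matches the case $n\not\equiv 0\pmod 6$, and $f(n/2)+1=n(n-3)+3$ matches the case $n\equiv 0\pmod 6$.
\end{itemize}
Thus for $n\ge 6$ or $n\ge 7$ odd, each chain of inequalities follows by putting Theorem~\ref{lower} on the left and Theorem~\ref{teoupperbounds} on the right. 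The only work there is the algebra above.

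\textbf{Small cases.} Theorem~\ref{lower} gives $\alpha_2(J(3,2))\ge 3$, $\alpha_2(J(4,2))\ge 7$ and $\alpha_2(J(5,2))\ge 15$. I will evaluate $x_0(n)$, $f$ and $g$ explicitly.
\begin{itemize}
\item $n=3$: here $J(3,2)=K_3$ has only $3$ edges, so $\alpha_2\le 3$ trivially. As a check, $x_0(3)\approx 1.02$, so $U(3)=\max\{\min\{3,3\},\min\{1,5\}\}=3$.
\item $n=4$: $f(x)=\lfloor 12/x\rfloor$ and $g(x)=6x+1$. We have $x_0(4)$ between $1$ and $2$, which gives $\min\{12,7\}=7$ and $\min\{6,13\}=6$, so $U(4)=7$.
\item $n=5$: $f(x)=\lfloor 30/x\rfloor$ and $g(x)=10x+1$. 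We have $x_0(5)$ between $1$ and $2$, which gives $\min\{30,11\}=11$ and $\min\{15,21\}=15$, so $U(5)=15$.
\end{itemize}

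\textbf{Main obstacle.} The delicate step is getting $\lfloor x_0\rfloor$ and $\lceil x_0\rceil$ right for $n=4,5$, since $U$ depends on which integers bracket $x_0$. I will evaluate the radicand numerically. For $n=5$ it is
\[
\frac{25}{2}-\frac54+\frac38+\frac{75/4-73/8}{25}\approx 12.0,
\]
so $x_0\approx -\tfrac1{20}+\tfrac12\sqrt{12}\approx 1.68$ and the floor/ceiling are $1,2$. The analogous computation for $n=4$ gives $x_0\approx 1.27$, so the floor/ceiling are again $1,2$. Once these are confirmed, matching lower and upper bounds give the three exact values.
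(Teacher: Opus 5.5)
Your overall route is the paper's. You take the lower bounds from Theorem~\ref{lower} and the upper bounds from Theorem~\ref{teoupperbounds}, and you obtain $U(3)=3$, $U(4)=7$, $U(5)=15$ by comparing $f$ and $g$ at the two integers around $x_0$. Your identities $f(\frac{n-1}{2})=n(n-2)$ and $f(\frac n2)=(n-1)(n-2)=n(n-3)+2$ are correct, and they are exactly what the statement records. For $n=3$, your edge-count argument via $J(3,2)\cong K_3$ is also what the paper does.

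The one step that the paper does not handle by citation is $\alpha_2(J(4,2))\ge 7$. The constructions of Subsection~\ref{subsec:lowerbound} are explicitly restricted to odd $n$ and even $n\ge 6$; the even count $n(n-3)+2$ would give only $6$ at $n=4$. The paper proves the $n=4$ case of Theorem~\ref{lower} inside this very proof. It writes down a proper complete $7$-edge-coloring of $J(4,2)$, which is the octahedron $K_6$ minus a perfect matching, with classes $\mathcal{C}_1,\dots,\mathcal{C}_7$ (five of size $2$ and two of size $1$).

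So citing Theorem~\ref{lower} for $n=4$ cites exactly what has to be established here. You should exhibit such a coloring and verify it: each class must be a matching, and every pair of colors must meet at some vertex. With the paper's classes, the vertices $01,02,03,12,13,23$ see the color sets $\{1,4,5,7\}$, $\{2,3,4,6\}$, $\{1,3,6,7\}$, $\{1,2,6,7\}$, $\{2,3,5,6\}$, $\{1,2,3,7\}$. Together these contain all $21$ pairs.

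A minor point: your numerical values of $x_0$ are slightly off. From $(4n-10)x^2+x-\binom n2(n-2)=0$ you get $2x^2+x-3=0$ at $n=3$, so $x_0(3)=1$ exactly. At $n=4$ you get $6x^2+x-12=0$, so $x_0(4)=4/3$. Neither correction changes $U(3)=3$ or $U(4)=7$. Solving these small quadratics directly is also cleaner than evaluating the simplified radicand.
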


\subsection{A construction that attains the lower bound on $\alpha_2(J(n,2))$}
\label{subsec:lowerbound}

In this subsection, we focus on obtaining a lower bound for the achromatic index of the Johnson graph $J(n,2)$.
We treat odd $n\geq3$ and even $n\geq6$. The case $n=4$ is handled separately.
We construct an 
achromatic edge-coloring of $ J(n,2) $ by defining color classes of two different types, depending on whether $n$ is even or odd. We begin with the case of odd $n$.

\subsubsection{The case of odd $n$}

In this subsection, we define an achromatic $t$-edge-coloring of $J(n,2)$ for odd $n$, with $t=n(n-2)$. The construction is based on an edge decomposition of $J(n,2)$ into $n$ subgraphs, called \emph{boxes}. Each box is edge-colored independently using a $1$-factorization,  while the union of the boxes guarantees completeness.
\paragraph{Construction.}

For each $i\in \mathbb{Z}_n$, let $\mathcal{B}_i$ be the subgraph of $J(n,2)$ that we call the {\emph{box of $i$}}, where
$$V(\mathcal{B}_i)=\{ij\ | \text{ for}\ j\in \Z_n\text{, and}\ j\not=i\},\ $$ and
$$
E(\mathcal{B}_i)=\{(ij,il)\ :\ j,l \in \mathbb{Z}_n,\ j\neq l,\ j\neq i,\ l\neq i\}.
$$
Notice that $\mathcal{B}_i$ is isomorphic to a complete graph of even order  $n-1$, and it is well known (see Chartrand, Lesniak, and Zhang \cite{CLZ16}) that it admits a 1-factorization with $(n-2)$ factors.

Let $\varphi$ be the edge-coloring of $J(n,2)$ such that each factor of $\mathcal{B}_i$ is assigned a distinct color; then we have a 1-factorization of $\mathcal{B}_i$ with $(n-2)$ colors. Observe that the restriction of $\varphi$ to $\mathcal{B}_i$ gives a proper complete $(n-2)$-edge-coloring of $\mathcal{B}_i$, where every vertex is incident to an edge of each color.

Moreover, $\mathcal{B}_i$ is a subgraph of $J(n,2)$ with the property that each color appears in an edge incident to a given vertex, for any vertex. For simplicity, in this paper, we say that the box $\mathcal{B}_i$ is the \emph{owner} of its colors (see Figure~\ref{fig:box-example}). 

\begin{figure}[!ht]
	\begin{center}
\includegraphics[width=12cm, page=11]{arti.pdf}
  \vskip-.3cm
	\caption{The odd case $n=5$. 
Left: The subgraphs $\mathcal{B}_0,\mathcal{B}_1,\mathcal{B}_2,\mathcal{B}_3$, and $\mathcal{B}_4$ of $J(5,2)$ with the restriction of $\varphi$. Right: The achromatic 15-edge-coloring $\varphi$ of $J(5,2)$.}
		\label{fig:box-example}
	\end{center}
\end{figure}
 
We do the same for each $i\in \Z_n$ so that $\varphi$ is an $n(n-2)$-coloring of $J(n,2)$. Notice that for distinct $i$ and $j$, the vertex sets $V(\mathcal{B}_i)$ and $V(\mathcal{B}_j)$ always intersect (that is, $V(\mathcal{B}_i)\cap V(\mathcal{B}_j)=\{ij\}$), while the edge sets $\{E(\mathcal{B}_i)\}_{i\in \mathbb{Z}_n}$ form a partition of $E(J(n,2))$ and, hence, are pairwise disjoint and cover all edges of $J(n,2)$  
(see Figure \ref{fig:box-example}).

\paragraph{Properness.} The coloring $\varphi$ of $J(n,2)$ is proper since each vertex $ij$ is incident only to edges from $\mathcal{B}_i$ and $\mathcal{B}_j$. Since $\mathcal{B}_i$ and $\mathcal{B}_j$ use disjoint color sets and each induces a proper edge-coloring, all edges incident to $ij$ receive distinct colors.

\paragraph{Completeness.}
To prove completeness, let $c_1$ and $c_2$ be two distinct colors. 

If both colors belong to the same box, then they are incident at every vertex of that box, since the box owns its colors. 

Otherwise, suppose that $c_1$ belongs to $B_i$ and $c_2$ belongs to $B_j$, with $i \neq j$. Since $V(B_i)\cap V(B_j)=\{ij\},$ and each box owns its colors, the two colors are incident at the vertex $ij$. Therefore, the coloring is complete. See Figure \red{\ref{fig:box-example}} for an example.

\subsubsection{The case of even $n$}
We first construct an achromatic edge-coloring $\varphi$ of $J(n,2)$ with $n(n-3)+2$ colors. We then modify this construction to obtain an achromatic edge-coloring $\varphi_0$ with $n(n-3)+3$ colors whenever
$n \equiv 0 \pmod{6}$.

\paragraph{Construction of $\varphi$.}

Since $n$ is even, we may choose, without loss of generality, the
following $1$-factor of $K_n$:
$$
F=
\bigl\{
\{0,1\},\{2,3\},\ldots,\{n-2,n-1\}
\bigr\}.
$$
Let
$$
p:\mathbb{Z}_n\longrightarrow\mathbb{Z}_n
$$
be the fixed-point-free involution induced by $F$, defined by
$$
p(i)=j
\quad\Longleftrightarrow\quad
\{i,j\}\in F.
$$
Thus,
$$
p(p(i))=i
\qquad\text{and}\qquad
p(i)\neq i
$$
for every $i\in\mathbb{Z}_n$. For the chosen $1$-factor $F$, we have
$$
p(i)=
\begin{cases}
i+1, & \text{if $i$ is even},\\
i-1, & \text{if $i$ is odd}.
\end{cases}
$$
Let
$$
I=\{0,2,\ldots,n-2\}.
$$
so that
$$
F=\bigl\{\{i,p(i)\}:i\in I\bigr\}.
$$
Since $p(i)=i+1$ for every $i\in I$, we shall continue to write
$B_{i+1}$ for $B_{p(i)}$ and $DB_{i(i+1)}$ for the corresponding
double-box.

Recall that all indices are taken modulo $n$. 
For each $i\in I$, we define two subgraphs of $J(n,2)$, denoted by
$B_i$ and $B_{p(i)}$, called the box of $i$ and the box of $p(i)$,
respectively, where
$$
V(B_i)
=
\left\{
ij:j\in\mathbb{Z}_n\setminus\{i,p(i)\}
\right\},
$$
$$
V(B_{p(i)})
=
\left\{
p(i)j:j\in\mathbb{Z}_n\setminus\{i,p(i)\}
\right\},
$$
and
$$
E(B_i)
=
\left\{
(ij,ik):
j,k\in\mathbb{Z}_n\setminus\{i,p(i)\},
\ j\neq k
\right\},
$$
$$
E(B_{p(i)})
=
\left\{
\bigl(p(i)j,p(i)k\bigr):
j,k\in\mathbb{Z}_n\setminus\{i,p(i)\},
\ j\neq k
\right\}.
$$

 

Note that the definition of a box in the odd case differs from that in the even case. Each box is isomorphic to $K_{n-2}$ and, as in the previous case, they admit a 1-factorization with $(n-3)$ factors. 

For each $r\in\mathbb{Z}_n$, let $\mathcal{P}_r$ be a set of
$n-3$ colors, and choose these palettes to be pairwise disjoint; that
is,
$$
\mathcal{P}_r\cap\mathcal{P}_s=\emptyset
\qquad\text{whenever }r\neq s.
$$
We color the $n-3$ factors of the $1$-factorization of $B_r$
bijectively with the colors of $\mathcal{P}_r$. Thus, each color
belongs to a unique box (see Figure \ref{fig:correspondence2}).


Thus, for each $((i)(i+1))$, we use $2(n-3)$ colors in total because the two subgraphs are assigned different colors. 

\begin{figure}[!ht]
	\begin{center}
\includegraphics[width=12cm,page=3]{arti.pdf}
  \vskip-.3cm
	\caption{The even case \(n=6\).
Left: A 1-factor \(\mathcal{F}\) of the complete graph \(K_6\).
Right: The subgraphs \(\mathcal{B}_i\) and \(\mathcal{B}_{i+1}\) of $J(6,2)$.}
		\label{fig:correspondence2}
	\end{center}
\end{figure}

Consequently, the restriction of $\varphi$ to each box is a proper complete edge-coloring. In particular, every box owns its colors.

The boxes alone are not sufficient to guarantee incidences between colors from different boxes. Therefore, we combine pairs of boxes into larger structures, called \emph{double-boxes}.

For each $i\in I$, we join $B_i$ and $B_{i+1}$
to form a new subgraph of $J(n,2)$, which we call the {\emph{double-box of $i(i+1)$}}, denoted by $\mathcal{DB}_{i(i+1)}$, as follows. 
Fix the vertex $i(i+1)$, which is adjacent to every vertex of both
$\mathcal B_i$ and $\mathcal B_{i+1}$.
Since each box has order $n-2$, the vertex $i(i+1)$ is incident with
exactly $n-2$ edges joining $\mathcal B_i$ and another $n-2$ joining
$\mathcal B_{i+1}$.\\
We first assign two new colors \(c\) and \(d\) to two specific edges incident to \(i(i+1)\):
the edge joining \(i(i+1)\) with \(i(i+2)\) receives color \(c\), and the edge joining
\(i(i+1)\) with \((i+1)(i+3)\) receives color \(d\), where all numbers are taken modulo \(n\).\\

For the remaining edges incident to $i(i+1)$, we choose a bijective 
assignment of the $n-3$ colors assigned within $B_{i+1}$ to the $n-3$ 
remaining edges joining $i(i+1)$ to vertices in $B_i$. Similarly, we 
choose a bijective assignment of the $n-3$ colors assigned within $B_i$ 
to the remaining edges joining $i(i+1)$ to vertices in $B_{i+1}$. Hence 
every color from each box is used exactly once at the central vertex \(i(i+1)\).
We use the colors $c$ and $d$ in every double-box
$DB_{i(i+1)}$, for $i\in I$, according to the preceding rule.

\begin{figure}[!ht]
	\begin{center}
\includegraphics[width=13cm,page=5]{arti.pdf}
  \vskip-.3cm
	\caption{ The even case \(n=6\): The \emph{double-box of $i(i+1)$} \(\mathcal{DB}_{i(i+1)}\), where color \(c\) is represented in red and color \(d\) in blue.}
		\label{fig:correspondence3}
	\end{center}
\end{figure}

Repeating the previous construction for every edge of the
1-factor $\mathcal F$ colors every edge of $J(n,2)$.
Indeed, every edge of $J(n,2)$ has a unique common label and may
therefore be written as
$$
(ij,ik),
$$
where $i,j,k\in\mathbb{Z}_n$ are pairwise distinct. If
$j,k\notin\{i,p(i)\},$
then $(ij,ik)$ belongs to the unique box $B_i$. Otherwise, exactly one
of $j$ and $k$ is equal to $p(i)$ and, hence, $(ij,ik)$ is incident
with the central vertex $i\,p(i)$. Such an edge is colored in the
corresponding double-box $DB_{i\,p(i)}$. Consequently, every edge of
$J(n,2)$ is colored exactly once.


Therefore, the number of colors of $\varphi$ is $t=n(n-3)+2=n^2-3n+2$. 


\paragraph{Properness.}
We now prove that $\varphi$ is a proper edge-coloring of $J(n,2)$.

First, consider a double-box $\mathcal{DB}_{i(i+1)}$. The restrictions of
$\varphi$ to the boxes $\mathcal{B}_i$ and $\mathcal{B}_{i+1}$ are proper
edge-colorings induced by their respective $1$-factorizations. Moreover,
when constructing the double-box, the colors of one box are reused only on
the edges joining the central vertex $i(i+1)$ to the other box. Hence,
every vertex of $\mathcal{DB}_{i(i+1)}$ is incident only with edges of
distinct colors.

It remains to verify properness at vertices that belong to boxes from
two distinct double-boxes. 
The color $c$ appears
precisely on the edges
$$
\mathcal{C}_c
=
\left\{
\bigl(i(i+1),i(i+2)\bigr):i\in I
\right\},
$$
whereas the color $d$ appears precisely on the edges
$$
\mathcal{C}_d
=
\left\{
\bigl(i(i+1),(i+1)(i+3)\bigr):i\in I
\right\}.
$$
Recall that all indices are taken modulo $n$. 
Both $\mathcal{C}_c$ and $\mathcal{C}_d$ are matchings, since the
edges in each of these two sets are pairwise nonincident.

Let $ij$ be a noncentral vertex; equivalently, $j\neq p(i).$
In this case, the four indices
$$
i,\quad j,\quad p(i),\quad p(j)
$$
are pairwise distinct.
The edges incident with $ij$ are of the following four types:
\begin{enumerate}
    \item edges of $B_i$ incident with $ij$, whose colors belong to
    $\mathcal{P}_i$;
    \item edges of $B_j$ incident with $ij$, whose colors belong to
    $\mathcal{P}_j$;
    \item the edge $\bigl(ij,i\,p(i)\bigr),$
    whose color belongs to $\mathcal{P}_{p(i)}$, unless it is one of
    the special edges;
    \item the edge $\bigl(ij,j\,p(j)\bigr)$,
    whose color belongs to $\mathcal{P}_{p(j)}$, unless it is one of
    the special edges.
\end{enumerate}
Since
$$
\mathcal{P}_i,\quad
\mathcal{P}_j,\quad
\mathcal{P}_{p(i)},\quad
\mathcal{P}_{p(j)}
$$
are pairwise disjoint, and the restriction of the coloring to each box
is proper; no two nonspecial edges incident with $ij$ receive the same
color. Moreover, each special color class is a matching, so no two
edges of the same special color are incident with $ij$.
Finally, at every central vertex $i\,p(i)$, the bijective assignments
used in the construction ensure that each color in
$$
\mathcal{P}_i\cup\mathcal{P}_{p(i)}
$$
appears exactly once, together with the two distinct special colors
$c$ and $d$. Therefore, all the edges incident with a central vertex
also receive distinct colors. Hence, $\varphi$ is a proper
edge-coloring of $J(n,2)$.


\paragraph{Completeness.} We now prove that $\varphi$ is complete.

First, consider two colors belonging to the same double-box
$\mathcal{DB}_{i(i+1)}$. If both belong to the same box
$\mathcal{B}_i$, then they meet in $\mathcal{B}_i$, since
$\mathcal{B}_i$ owns its colors. If one belongs to $\mathcal{B}_i$
and the other to $\mathcal{B}_{i+1}$, then they meet at the vertex
$i(i+1)$. Moreover, the additional colors $c$ and $d$ are incident
with every other color of $\mathcal{DB}_{i(i+1)}$ at this vertex.

Finally, let one color belong to
$\mathcal{DB}_{i(i+1)}$ and another to a different double-box
$\mathcal{DB}_{k(k+1)}$. Without loss of generality, suppose they
belong to $\mathcal{B}_i$ and $\mathcal{B}_{k+1}$, respectively.
Since the boxes $\mathcal{B}_i$ and $\mathcal{B}_{k+1}$ intersect
exactly in the vertex $i(k+1)$, the two colors meet at that vertex.
Therefore, every pair of colors is incident, and $\varphi$ is a
complete edge-coloring of $J(n,2)$.



\paragraph{Improvement when $n\equiv0\pmod6$.} We now show that, when $n \equiv 0 \pmod{6}$, the previous construction can be modified to obtain an achromatic edge-coloring $\varphi_0$ with one additional color. 

The idea is to replace the two extra colors $c$ and $d$ by three colors $c, d, e$, distributed among triples of consecutive \emph{double-boxes} in such a way that every new color still meets every other color. Specifically, for each $i \in \{0, \dots, n-1\}$ with $i \equiv 0 \pmod{6}$, these three colors are assigned to the extra edges of the consecutive double-boxes as follows:
\begin{align*}
\mathcal{DB}_{i(i+1)} &\text{ receives the colors } \{c, d\}, \\
\mathcal{DB}_{(i+2)(i+3)} &\text{ receives the colors } \{c, e\}, \text{ and} \\
\mathcal{DB}_{(i+4)(i+5)} &\text{ receives the colors } \{d, e\}.
\end{align*}

Since each new color appears in only two of the three double-boxes, it is not immediate that it meets all the colors contained in the remaining double-box. To do this, we choose the two special edges for each color to have one endpoint in the missing double-box. Consequently, every new color is adjacent to all the colors owned by the
corresponding boxes. More precisely, for every
$i\equiv0\pmod6$,
we define
\begin{itemize}
\item $\varphi_0(i(i+1),i(i+4))=\varphi_0((i+2)(i+3),(i+2)(i+5))=c$
\item $\varphi_0(i(i+1),(i+1)(i+2))=\varphi_0((i+4)(i+5),(i+4)(i+3))=d$
\item $\varphi_0((i+2)(i+3),(i+3)i)=\varphi_0((i+4)(i+5),(i+5)(i+1))=e$
\end{itemize}

\begin{figure}[t]
	\begin{center}
\includegraphics[width=13cm,page=10]{arti.pdf}
  \vskip-.3cm
	\caption{ The even case $n=6$: Left: Double-boxes \(\mathcal{DB}_{01}\), \(\mathcal{DB}_{23}\), \(\mathcal{DB}_{45}\) (red: color \(c\), blue: color \(d\). black: color $e$). Right: The corresponding achromatic 21-edge-coloring in \(J(6,2)\).}
 		\label{fig:j(6,2)21}
	\end{center}
\end{figure}
For example, when $n=6$, the colors are distributed as follows:
\begin{itemize}
    \item $\mathcal{DB}_{01}$ receives $\{c,d\}$, with $\varphi_0(01,04)=c$ and $\varphi_0(01,12)=d$;
    \item $\mathcal{DB}_{23}$ receives $\{c,e\}$, with $\varphi_0(23,25)=c$ and $\varphi_0(23,30)=e$;
    \item $\mathcal{DB}_{45}$ receives $\{d,e\}$, with $\varphi_0(45,43)=d$ and $\varphi_0(45,51)=e$.
\end{itemize}
This yields a $21$-complete coloring $\varphi_0$ of $J(6,2)$, as shown in Figure~\ref{fig:j(6,2)21}.

\paragraph{Properness.}
The coloring $\varphi_0$ remains proper because, for each special color 
$q \in \{c,d,e\}$, the edges colored $q$ are pairwise nonincident. 
This follows from the construction within each block of six consecutive 
indices, together with the fact that different blocks involve disjoint 
sets of indices. Hence no vertex is incident with two edges of the 
same color.

\paragraph{Completeness.}

The previous argument for completeness still applies to the original
colors.
The only additional point is to verify that each of the new colors
$c,d,e$
meets all the colors of the double-box in which it does not appear.
This follows directly from the special choice of the edges listed above.
Hence every pair of colors is incident, and
$\varphi_0$
is a complete edge-coloring.

\subsection{Upper Bounds for $\alpha_2(J(n,\blue{2}))$}
\label{subsec:upperbound}

The techniques used in this section were introduced by Jamison \cite{j89}, Araujo-Pardo, Montellano-Ballesteros, and Strausz \cite{apms11}, and Araujo-Pardo, Montellano-Ballesteros, Strausz, and Rubio-Montiel \cite{apmsr14}, and were used in almost all the papers cited in this work. 

We now optimize the preceding estimate and show that only two integer values of $x$ need to be considered.


Let $ \varphi : E(J(n,2)) \to [t] $ be an achromatic edge-coloring of the Johnson graph $J(n,2)$, using $t$ colors. 

Let $x$ denote the size of the smallest color class; that is, $x = \min_{i \in [t]} |\varphi^{-1}(i)|$. Each color class must form a matching (recall that it is an independent set in the line graph of $J(n,2)$). We consider two constraints:

\begin{itemize}
    \item[(i)] Since the total number of edges in $J(n,2)$ is $\binom{n}{2}(n - 2)$, the number of color classes is at most
    $$
    f(x) = \left\lfloor \frac{\binom{n}{2} \cdot (n - 2)}{x} \right\rfloor.
    $$
    
\item[(ii)] On the other hand, to ensure completeness, each color must appear together with every other color on some vertex. If a color class contains $x$ independent edges, then each edge can be incident to at most 2($2n -5$)=4n-10
other edges. This bound follows from the fact that in $J(n,k)$ every vertex has degree $k(n-k)$. So, 
this yields an upper bound on the total number of colors in the achromatic edge-coloring:
$$
g(x) = 4nx - 10x + 1, 
$$
where we added 1 to count the color itself.
\end{itemize}

In summary, for a fixed $x$, the number of colors $t$ is bounded by both $f(x)$ (due to the total number of edges) and $g(x)$ (due to completeness). Hence, we obtain the following general upper bound:
\begin{equation}
\label{equuperbound}
 \alpha_2(J(n,2)) \leq \max_{x \in \mathbb{N}} \left\{ \min\left( f(x), g(x) \right) \right\}.   
\end{equation}

To improve the previous bound, we analyze the behavior of both functions. 
Since $f(x)$ is the floor of a decreasing function and $g(x)$ is increasing, 
the maximum value of $\min\{f(x),g(x)\}$ is attained near the point where 
the two curves intersect. To formalize this, we consider the continuous 
version of $f$, defined as:
$$
F:\mathbb R_{>0}\to\mathbb R,\qquad
F(x)=\frac{\binom n2(n-2)}{x}.
$$
Since $F$ is strictly decreasing and $g$ is strictly increasing, there exists a unique real number $x_0>0$ such that
$F(x_0)=g(x_0)$.
\begin{lemma}\label{lem:op}
Let $x_0$ be the unique positive solution of
$F(x)=g(x)$.
Then
$$
\max_{x\in\mathbb N}\min\{f(x),g(x)\}
=
\max\left\{
\min\{f(\lfloor x_0\rfloor),g(\lfloor x_0\rfloor)\},
\min\{f(\lceil x_0\rceil),g(\lceil x_0\rceil)\}
\right\}.
$$
\end{lemma}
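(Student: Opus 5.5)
The plan is to compare $h(x):=\min\{f(x),g(x)\}$ on the two sides of $x_0$, using that $f$ is non-increasing and $g$ is strictly increasing on $\mathbb N$. Write $N=\binom n2(n-2)$, so $f(x)=\lfloor N/x\rfloor=\lfloor F(x)\rfloor$. Since $g$ is linear with positive slope $4n-10$ (for $n\ge 3$) and $F$ is strictly decreasing, we have $g(x)<F(x)$ for $x<x_0$ and $g(x)>F(x)$ for $x>x_0$. Throughout I will use that $g(x)$ is an integer for integer $x$.

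\textbf{Left of $x_0$.} For an integer $x\le\lfloor x_0\rfloor$ we have $g(x)\le F(x)$. Because $g(x)$ is an integer, this gives $g(x)\le\lfloor F(x)\rfloor=f(x)$, and hence $h(x)=g(x)$. Since $g$ is increasing, $h(x)=g(x)\le g(\lfloor x_0\rfloor)=h(\lfloor x_0\rfloor)$. So every integer $x\le\lfloor x_0\rfloor$ is dominated by $\lfloor x_0\rfloor$. The edge case $\lfloor x_0\rfloor=0$ is excluded, since $g(1)=4n-9\le N=F(1)$ for $n\ge3$ forces $x_0\ge1$.

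\textbf{Right of $x_0$.} For an integer $x\ge\lceil x_0\rceil$ we have $g(x)\ge F(x)\ge\lfloor F(x)\rfloor=f(x)$, and hence $h(x)=f(x)$. Since $f$ is non-increasing, $h(x)=f(x)\le f(\lceil x_0\rceil)=h(\lceil x_0\rceil)$. So every integer $x\ge\lceil x_0\rceil$ is dominated by $\lceil x_0\rceil$. If $x_0$ happens to be an integer, both cases collapse to the single point $x_0$ and the statement still holds.

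Because every $x\in\mathbb N$ lies in one of the two ranges, the maximum over $\mathbb N$ equals the larger of $h(\lfloor x_0\rfloor)$ and $h(\lceil x_0\rceil)$. The reverse inequality is trivial, since both points belong to $\mathbb N$. This proves Lemma~\ref{lem:op}.

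The only delicate step is the left-side inequality $g(x)\le f(x)$, where the floor could in principle interfere. It is handled precisely by the integrality of $g$ at integer arguments. Finally, I would check that the explicit formula for $x_0$ in the definition of $U(n)$ agrees with the positive root of the quadratic $(4n-10)x^2+x-N=0$. This is a routine computation and is needed only to connect the lemma with Theorem~\ref{teoupperbounds}.
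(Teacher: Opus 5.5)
Your proof is correct and follows essentially the same route as the paper. Both split the integers at $x_0$ and use the integrality of $g$ to show that $\min\{f,g\}=g$ is increasing for $x\le\lfloor x_0\rfloor$ and $\min\{f,g\}=f$ is non-increasing for $x\ge\lceil x_0\rceil$. Your check that $x_0\ge 1$, so that $f(\lfloor x_0\rfloor)$ is defined, is a small detail the paper leaves implicit.
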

\begin{proof}
Since $F$ is strictly decreasing and $g$ is strictly increasing, we have
$F(x)\ge g(x)$ for every $x\le x_0$, and
$F(x)<g(x)$ for every $x>x_0$.\\
Since $g$ takes integer values on $\mathbb N$, it follows that
$f(x)=\lfloor F(x)\rfloor\ge g(x)$ whenever $x\le x_0$, whereas
$f(x)<g(x)$ whenever $x>x_0$. Hence,
$$
\min\{f(x),g(x)\}=
\begin{cases}
g(x), & x\le x_0,\\
f(x), & x>x_0.
\end{cases}
$$
Therefore, $\min\{f(x),g(x)\}$ is increasing on
$\{x\in\mathbb N:x\le x_0\}$ and non-increasing on
$\{x\in\mathbb N:x>x_0\}$. Consequently, its maximum is attained at one of the two integers nearest to $x_0$, namely, $\lfloor x_0\rfloor$ or $\lceil x_0\rceil$.
\end{proof}

We now compute the value of $x_0$. The equation $F(x) = g(x)$ can be written explicitly as
$$
\frac{\binom{n}{2}(n-2)}{x} = (4n-10)x+1,
$$
giving the quadratic equation
$$
(4n-10)x^2+x-\binom{n}{2}(n-2)=0.
$$
The unique positive solution $x_0$ to this equation is given by
\begin{equation}
\label{x0}
x_0 = \frac{-1+\sqrt{1+4(4n-10)\binom{n}{2}(n-2)}}{8n-20},
\end{equation}
which simplifies to
\begin{equation}
\label{x0_simplified}
x_0 =- \frac{1}{8n-20} + \frac12 \sqrt{\frac{n^2}{2} -\frac n4 +\frac38 + \frac{\frac{15n}{4}-\frac{73}{8}}{4n^2-20n+25}}.
\end{equation}
In particular, note that $x_0 \approx \frac{n}{\sqrt{8}}$. The resulting upper bounds for $4 \leq n \leq 12$ are summarized in Table~\ref{tab:betterupper-bounds}.  This calculation of the 
intersection point $x_0$, together with Lemma~\ref{lem:op}, gives us 
the desired upper bound and completes the proof of Theorem~\ref{teoupperbounds}.

Table \ref{tab:betterupper-bounds} gives us the values that appear as a consequence of Theorem \ref{teoupperbounds}.

\begin{table}[t]
\centering
\begin{tabular}{|c|c|c|c|c|c|}
\hline
$n$ & $x$ & $f(x)$ & $g(x)$ & $\min(f,g)$ & Upper bound \\
\hline
4 & 1 & 12 & 7 & 7 & 7\\
& 2 & 6 & 13 & 6 &  \\
\hline
5 & 1 & 30 & 11 & 11 & 15 \\
& 2 & 15 & 21 & 15 & \\
\hline
6 & 2 & 30 & 29 & 29 & 29 \\
& 3 & 20 & 43 & 20 & \\
\hline
7 & 2 & 52  & 37  & 37   &  37 \\
& 3 & 35  & 55  & 35   &  \\
\hline
8 & 2 & 84  & 45   & 45   & 56 \\
 & 3 & 56  & 67   & 56   &  \\
\hline 
9 & 3 & 84 & 79 & 79 & 79 \\
& 4 & 63 & 105 &63 & \\
\hline
10 & 3 & 120 & 91 & 91 & 91 \\
& 4 & 90 & 121 &90 & \\
\hline
11 & 3 & 165 & 103 & 103 & 123 \\
& 4 & 123 & 137 & 123 &  \\
\hline
12  & 4 & 165 & 153 & 153 & 153 \\
& 5 & 132 & 191 & 132 &  \\
\hline
\end{tabular}
\caption{Upper bounds on $\alpha_2(J(n,2))$ using the results of Theorem \ref{teoupperbounds}. 
}
\label{tab:betterupper-bounds}
\end{table}


\subsection{Proof of the Main Corollary}
\label{subsec:theorems}
In this section, we complete the proof of the main results by
establishing Corollary~\ref{mainth}. The constructions in Subsection~\ref{subsec:lowerbound} prove
Theorem~\ref{lower} for $n\neq 4$, while the case $n=4$ is established
below. Theorem~\ref{teoupperbounds} was proved in Subsection~\ref{subsec:upperbound}.

\paragraph{Proof of Corollary \ref{mainth}.}

We distinguish two cases.

\smallskip

\noindent
\textbf{Case 1.} $n$ odd.

If $n=3$, then $J(3,2)$ is isomorphic to $K_3$, and therefore
$\alpha_2(J(3,2))=3$.

For $n=5$, Subsection \ref{subsec:lowerbound} provides a $15$-achromatic
edge-coloring of $J(5,2)$, while Table \ref{tab:betterupper-bounds} shows that
the upper bound is also $15$. Hence,
$\alpha_2(J(5,2))=15$.

For odd integers $n\ge7$, Theorem \ref{lower} gives
$$
f\!\left(\frac{n-1}{2}\right)=n(n-2)\le \alpha_2(J(n,2)),
$$
while Theorem \ref{teoupperbounds} gives the corresponding upper bound
$\alpha_2(J(n,2))\le U(n)$.

\smallskip

\noindent
\textbf{Case 2.}  $n$ even.

We first consider the case $n=4$. Figure \ref{J(4,2)} shows an achromatic
$7$-edge-coloring of $J(4,2)$, and therefore
$\alpha_2(J(4,2))\ge7$. 
The coloring shown in Figure~\ref{fig:J(4,2)} can be stated explicitly as follows:
$$
\begin{aligned}
\mathcal{C}_1 &= \{(01,12),(23,03)\},\\
\mathcal{C}_2 &= \{(12,13),(02,23)\},\\
\mathcal{C}_3 &= \{(13,23),(03,02)\},\\
\mathcal{C}_4 &= \{(01,02)\},\\
\mathcal{C}_5 &= \{(01,13)\},\\
\mathcal{C}_6 &= \{(13,03),(12,02)\},\\
\mathcal{C}_7 &= \{(01,03),(23,12)\}.
\end{aligned}
$$
Each $\mathcal{C}_i$, for $i\in\{1,\ldots,7\}$, is a matching.
Moreover, for every pair of distinct indices $i,j\in\{1,\ldots,7\}$,
there exist edges $e_i\in\mathcal{C}_i$ and
$e_j\in\mathcal{C}_j$ that are incident. Therefore, these color
classes define a proper and complete $7$-edge-coloring of $J(4,2)$.

On the other hand, by
Theorem \ref{teoupperbounds},
$\alpha_2(J(4,2))\le7$. Consequently,
$\alpha_2(J(4,2))$ $=7$.

For even integers $n\ge6$, Theorem \ref{lower} gives the lower bound,
while Theorem \ref{teoupperbounds} provides the corresponding upper bound.

This completes the proof.

\medskip

\noindent
\textbf{Observation.} The lower bounds obtained in Theorem \ref{lower} are precisely the values of $f(x)$ at
$x=\frac{n-1}{2}$
when $n$ is odd, and at $x=\frac{n}{2}$
when $n$ is even and $n\not\equiv0\pmod{6}$.


\begin{figure}[!ht]
	\begin{center}
	\includegraphics[width=4cm]{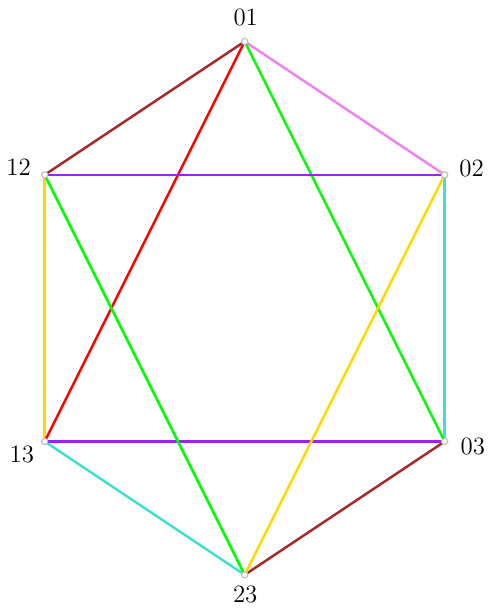}
  \vskip-.3cm
	\caption{An achromatic $7$-edge-coloring of $J(4,2)$. }
		\label{fig:J(4,2)}
	\end{center}
\end{figure}


\section{Conclusion and future research}
\label{sec:conclusion}



As said in the Introduction, the Johnson graphs are a subclass of token graphs, with $G=K_n$ and $F_k(G)=J(n, k)$. In this paper we started on this family, but a natural direction for future research is the study of the achromatic index of token graphs arising from other graph families, such as cycles and paths. 

In general, our objective is to understand how an achromatic edge-coloring of a graph 
$G$ induces an achromatic edge-coloring on the 2-token graph $F_2(G)$ of $G$, exploring its combinatorial properties. 

Another open problem is to generalize these results to $J(n,k)$ for $3\leq k \leq n-3$ and to study the achromatic index of token graphs across different families of graphs. 

Finally, regarding this specific problem, one of the primary objectives is to determine the exact value of $\alpha_2(J(n,2))$. In particular, we are interested in the case $\alpha_2(J(7,2))$.
The bounds obtained in this paper for $J(7,2)$ are
$
35 \leq \alpha_2(J(7,2)) \leq 37.
$
We conjecture that the upper bound can be improved to 36, that is,
$
35 \leq \alpha_2(J(7,2)) \leq 36.
$
We invite the reader to determine the exact value of $\alpha_2(J(7,2))$ or to further improve these bounds using computational or other suitable techniques.
\section{Acknowledgments} 
We express our special thanks to Teresa I. Hoekstra-Mendoza and Miguel Licona-Velázquez for their enthusiasm at the start of this work and for their invaluable help in adapting puzzles to solve the problem.


\end{document}